\newtheorem{theorem}{Theorem}[section]
\newtheorem{lemma}[theorem]{Lemma}
\newtheorem{proposition}[theorem]{Proposition}
\newtheorem{corollary}[theorem]{Corollary}
\theoremstyle{definition}
\newtheorem{example}[theorem]{Example}
\theoremstyle{remark}
\newtheorem{remark}[theorem]{Remark}
\numberwithin{equation}{section}
\begin{document}

\title [Refinements of a reversed AM--GM operator inequality]{Refinements of a reversed AM--GM operator inequality}

\author[M. Bakherad ]{Mojtaba Bakherad }

\address{ Department of Mathematics, Faculty of Mathematics, University of Sistan and Baluchestan, P.O. Box 98135-674, Zahedan, Iran.}
\email{mojtaba.bakherad@yahoo.com; bakherad@member.ams.org}

\subjclass[2010]{Primary 47A63, Secondary  47A60.}

\keywords{the operator  arithmetic mean, the operator geometric
mean, the operator harmonic mean, positive unital linear map, 
  reverse AM--GM operator inequality.}
%~~~~~~~~~~~~~~~~~~~~~~~~~~~~~~~~~~~~~~~~~~~~~~~~~~~~~~~~~~~~~~~~~~~~~~~~~~~~~~~~~~~~~~~~~~~~~~~~~~~~~~~~~~~~~~~~~~~~~~~~~~~~~~~~~~~
\begin{abstract}
We prove some refinements of a reverse AM--GM operator inequality
due to M. Lin [Studia Math. 2013;215:187--194]. In particular, we
show the operator inequality
 \begin{eqnarray*}
\Phi^p\left(A\nabla_\nu B+2rMm(A^{-1}\nabla B^{-1}-A^{-1}\sharp B^{-1})\right)\leq\alpha^p\Phi^p\left(A\sharp_\nu B\right),
\end{eqnarray*}
where $A,B$ are positive operators on a Hilbert space such that $0<m \leq A, B \leq M$ for some positive numbers $m, M$, $\Phi$ is a positive unital linear map,
 $\nu\in[0,1]$, $r=\min\{\nu,1-\nu\}$,
 $p>0$ and $\alpha=\max\left\{\frac{(M+m)^2}{4Mm},\frac{(M+m)^2}{4^\frac{2}{p}Mm}\right\}$.
\end{abstract} \maketitle

%~~~~~~~~~~~~~~~~~~~~~~~~~~~~~~~~~~~~~~~~~~~~~~~~~~~~~~~~~~~~~~~~~~~~~~~~~~~~~~~~~~~~~~~~~~~~~~~~~~~~~~~~~~~~~~~~~~~~~~
\section{Introduction and preliminaries}

Let ${\mathbb B}({\mathscr H})$ denote the $C^*$-algebra of all
bounded linear operators on a complex Hilbert space ${\mathscr
H}$, with the identity $I$. In the case when ${\rm dim}{\mathscr
H}=n$, we identify ${\mathbb B}({\mathscr H})$ with the matrix
algebra $\mathbb{M}_n$ of all $n\times n$ matrices with entries in
the complex field. An operator $A\in{\mathbb B}({\mathscr H})$ is
called positive if $\langle Ax,x\rangle\geq0$ for all
$x\in{\mathscr H }$, and we then write $A\geq0$. We write $A>0$ if
$A$ is a positive invertible operator. For self-adjoint operators
$A, B\in{\mathbb B}({\mathscr H})$ we say that $A\leq B$ if
$B-A\geq0$. The Gelfand map $f(t)\mapsto f(A)$ is an
isometrical $*$-isomorphism between the $C^*$-algebra
$C({\rm sp}(A))$ of continuous functions on the spectrum ${\rm sp}(A)$
of a self-adjoint operator $A$ and the $C^*$-algebra generated by $A$ and $I_{\mathscr H}$. If $f, g\in C({\rm sp}(A))$, then
$f(t)\geq g(t)\,\,(t\in{\rm sp}(A))$ implies that $f(A)\geq g(A)$.

Let $A,B\in{\mathbb B}({\mathscr H})$ be two positive invertible
operators and   $\nu\in[0,1]$.  The operator weighted arithmetic,
geometric and harmonic means are defined by $A\nabla_\nu
B=(1-\nu)A+\nu B, A\sharp_\nu
B=A^{\frac{1}{2}}\left(A^{\frac{-1}{2}}BA^{\frac{-1}{2}}\right)^{\nu}A^{\frac{1}{2}}$
and $A!_\nu B=\left((1-\nu)A^{-1}+\nu B^{-1}\right)^{-1}$,
respectively.  In particular, for $\nu=\frac{1}{2}$ we get the
usual operator arithmetic mean $\nabla$, the geometric mean
$\sharp$ and the harmonic mean $!$. The AM--GM inequality reads
\begin{align*}
\frac{A+B}{2}\geq A\sharp B,
\end{align*}
for all positive operators $A, B$. It
is shown in \cite{lin}  the following reverse of AM--GM inequality involving positive linear maps
\begin{align}\label{mos1}
\Phi\left(\frac{A+B}{2}\right)\leq \frac{(M+m)^2}{4Mm}\Phi(A\sharp B),
\end{align}
where $0<m \leq A,B\leq M$ and  $\Phi$ is a positive
unital linear map.

For two positive operators $A, B\in {\mathbb B}({\mathscr H})$,
the L\"owner--Heinz inequality states that, if $A\leq B$, then
\begin{align}\label{12345a}
A^p\leq B^p,\qquad(0\leq p\leq 1).
\end{align}
In general \eqref{12345a} is not true for $p>1$.  Lin
\cite[Theorem 2.1]{lin}  showed however a squaring of
\eqref{mos1}, namely that the inequality
\begin{align}\label{12345678}
\Phi^2\left(\frac{A+B}{2}\right)\leq \left(\frac{(M+m)^2}{4Mm}\right)^2\Phi^2(A\sharp B)
\end{align}
as well as
\begin{align}\label{123456781}
\Phi^2\left(\frac{A+B}{2}\right)\leq
\left(\frac{(M+m)^2}{4Mm}\right)^2(\Phi(A)\sharp \Phi(B))^2
\end{align}
hold. Using inequality \eqref{12345a} we therefore get
\begin{align}\label{lala1}
\Phi^p\left(\frac{A+B}{2}\right)\leq \left(\frac{(M+m)^2}{4Mm}\right)^p\Phi^p(A\sharp B)\qquad(0<p\leq 2)
\end{align}
and
\begin{align}\label{lala2}
\Phi^p\left(\frac{A+B}{2}\right)\leq \left(\frac{(M+m)^2}{4Mm}\right)^p(\Phi(A)\sharp \Phi(B))^p\qquad(0<p\leq 2),
\end{align}
where $0<m \leq A,B\leq M$ and $\Phi$ is a positive unital linear
map.

In \cite{f-h} the authors  extended  \eqref{12345678} and
\eqref{123456781} to $p>2$. They proved that the inequalities
\begin{align}\label{123456789a}
\Phi^p\left(\frac{A+B}{2}\right)\leq \left(\frac{(M+m)^2}{4^\frac{2}{p}Mm}\right)^p\Phi^p(A\sharp B)\qquad(p>2)
\end{align}
and
\begin{align}\label{123456789ab}
\Phi^p\left(\frac{A+B}{2}\right)\leq \left(\frac{(M+m)^2}{4^\frac{2}{p}Mm}\right)^p(\Phi(A)\sharp \Phi(B))^p\qquad(p>2),
\end{align}
where $0<m \leq A,B\leq M$. In \cite{hbt} and \cite{lin12} the authors showed that
\begin{eqnarray}\label{eq001}
\Phi^p\left(A\sigma
B\right)\leq\alpha^p\Phi^p\left(A\tau
B\right),
\end{eqnarray}
and
\begin{eqnarray}\label{eq002}
\Phi^p\left(A\sigma B\right)\leq\alpha^p\left(\Phi(A)\tau \Phi(B)\right)^p,
\end{eqnarray}
 where $0<m \leq A, B \leq M$,  $\Phi$ be a positive unital linear map, $\sigma$, $\tau$ be two arbitrary
means between harmonic and arithmetic means,  $\alpha=\max\left\{\frac{(M+m)^2}{4Mm},\frac{(M+m)^2}{4^\frac{2}{p}Mm}\right\}$ and $p>0$.
 Choi's inequality (see e.g. \cite[p. 41]{bk2}) reads
\begin{align}\label{coi}
\Phi(A)^{-1}\leq \Phi(A^{-1}),
\end{align}
for any positive unital linear map $\Phi$ and operator $A>0$.
Choi's inequality cannot be squared  \cite{lin}, but a reverse of
Choi's inequality (known as the operator Kantorovich inequality)
can be squared, see e.g. \cite{lin1}.

In this paper, we present some refinements of inequalities \eqref{lala1} and \eqref{lala2} under some mild conditions for $0<p\leq1$ and
 some refinements of inequalities \eqref{123456789a} and \eqref{123456789ab} for the operator norm and $p>2$. We also show a refinement of the operator P\'olya--Szeg\"o inequality.
%===================================================================================================================================
\section{Main results}
We need the following lemmas to prove our results.
\begin{lemma}\cite{bk}\label{main}
Let $A,B>0$. Then
\begin{align*}
\|AB\|\leq\frac{1}{4}\|A+B\|^2.
\end{align*}
\end{lemma}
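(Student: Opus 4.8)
The plan is to first strip the square off the right-hand side by exploiting positivity: since $A+B>0$ one has $\|A+B\|^2=\|(A+B)^2\|$, so the assertion is equivalent to
\begin{align*}
\|AB\|\leq\tfrac14\|(A+B)^2\|.
\end{align*}
This is the operator-norm instance of the arithmetic--geometric mean inequality for products of positive operators, and its scalar shadow is merely $ab\leq\frac14(a+b)^2$ with equality at $a=b$. That equality case tells me the constant $\frac14$ is optimal and that any proof must be sensitive to how far $A$ and $B$ are from coinciding.

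To make that sensitivity explicit I would pass to the Cartesian-type splitting $A=\frac{P}{2}+C$, $B=\frac{P}{2}-C$, where $P=A+B>0$ and $C=\frac{A-B}{2}$ is self-adjoint. The hypotheses $A,B\geq0$ read $-\frac{P}{2}\leq C\leq\frac{P}{2}$, whence $\|C\|\leq\frac12\|P\|$ and $C^2\leq\frac14\|P\|^2I$. A direct expansion gives
\begin{align*}
AB=\tfrac14P^2-C^2-\tfrac12(PC-CP),
\end{align*}
so that the self-adjoint part of $AB$ is $\frac12(AB+BA)=\frac14P^2-C^2$. Combining $0\leq C^2\leq\frac14\|P\|^2I$ with $\frac14P^2\leq\frac14\|P\|^2I$ pins this self-adjoint part down exactly,
\begin{align*}
-\tfrac14\|P\|^2I\leq\tfrac14P^2-C^2\leq\tfrac14\|P\|^2I,\qquad\text{hence}\qquad\left\|\tfrac12(AB+BA)\right\|\leq\tfrac14\|A+B\|^2,
\end{align*}
which is the desired bound for the real part and uses positivity in a completely transparent way.

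The remaining, genuinely hard, ingredient is the skew-adjoint piece $-\frac12(PC-CP)$: because $\|H+iK\|$ can exceed $\|H\|$, the estimate on the real part does not by itself bound $\|AB\|$, and one must show that the commutator $PC-CP$ cannot enlarge the norm past $\frac14\|A+B\|^2$. This coupling between $P$ and $C$ is exactly where the elementary tools break down: submultiplicativity yields only $\|A\|\,\|B\|$; the positive block matrix whose diagonal blocks are $A$ and $B$ and whose off-diagonal block is $A^{1/2}B^{1/2}$, together with phase rotations, controls only the numerical radius $w(AB)$ and the spectral radius $r(AB)=\|A^{1/2}B^{1/2}\|^2$; and the companion estimate $2\|AB\|\leq\|A^2+B^2\|$ gives only the weaker bound $\frac12\|A^2+B^2\|$, which dominates $\frac14\|(A+B)^2\|$ since $(A+B)^2\leq2(A^2+B^2)$. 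Securing the sharp constant against $(A+B)^2$ rather than against $2(A^2+B^2)$ is the crux, and it is precisely the content of the cited estimate; I would finish it through the singular-value (Fan-dominance) machinery of \cite{bk}.
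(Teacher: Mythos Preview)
The paper does not prove this lemma; it simply quotes the inequality from Bhatia--Kittaneh \cite{bk}. There is therefore no in-paper argument to compare against.

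Your proposal is also not a complete proof, and you acknowledge this. The Cartesian splitting $A=\tfrac{P}{2}+C$, $B=\tfrac{P}{2}-C$ is correct, and the bound $\bigl\|\tfrac12(AB+BA)\bigr\|\le\tfrac14\|A+B\|^2$ on the self-adjoint part is valid. Your diagnosis of why submultiplicativity, the positive $2\times 2$ block trick, and the companion estimate $2\|AB\|\le\|A^2+B^2\|$ each fall short of the sharp constant is accurate as well. But the closing sentence---``I would finish it through the singular-value (Fan-dominance) machinery of \cite{bk}''---is an appeal to the very source being cited, not an argument. You have controlled only the Hermitian part of $AB$; the commutator term $\tfrac12(PC-CP)$ is left untreated, and without it the full operator norm $\|AB\|$ is not bounded. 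In effect you arrive exactly where the paper does: the inequality is deferred to \cite{bk}.
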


\begin{lemma}\cite{ando}\label{main2}
Let $A,B\geq0$ and $p>1$. Then
\begin{align*}
\|A^p+B^p\|\leq\|(A+B)^p\|.
\end{align*}
\end{lemma}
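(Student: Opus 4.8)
The plan is to reduce the claim to a normalized situation in which everything follows from an elementary scalar inequality fed through the order-preserving functional calculus recorded in the preliminaries. First I would observe that both sides are homogeneous of degree $p$ in the pair $(A,B)$: replacing $A,B$ by $\lambda A,\lambda B$ with $\lambda>0$ multiplies each of $\|A^p+B^p\|$ and $\|(A+B)^p\|$ by $\lambda^p$. Since $A+B\geq0$, the spectral mapping property gives $\|(A+B)^p\|=\|A+B\|^p$, so after rescaling I may assume $\|A+B\|=1$, i.e. $0\leq A+B\leq I$.

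The key step is then the elementary fact that for $p>1$ one has $t^p\leq t$ for all $t\in[0,1]$. Under the normalization $A+B\leq I$ we get $0\leq A=(A+B)-B\leq A+B\leq I$ and likewise $0\leq B\leq I$, so ${\rm sp}(A),{\rm sp}(B)\subseteq[0,1]$. Applying the order-preserving functional calculus (the Gelfand map, as stated in the introduction, sends $f(t)\geq g(t)$ on the spectrum to $f(A)\geq g(A)$) to the scalar inequality $t^p\leq t$ yields $A^p\leq A$ and $B^p\leq B$. Adding these gives $A^p+B^p\leq A+B\leq I$, and since $A^p+B^p\geq0$ this forces $\|A^p+B^p\|\leq 1=\|A+B\|^p=\|(A+B)^p\|$. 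Undoing the normalization by homogeneity completes the argument.

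The point to guard against --- and the reason this is genuinely a norm statement rather than an operator statement --- is that one cannot hope to prove $A^p+B^p\leq(A+B)^p$ as an operator inequality for general $p>1$, since $t\mapsto t^p$ fails to be operator convex once $p>2$ and the naive superadditivity breaks down. The whole force of the argument is that passing to norms, together with the homogeneity reduction, replaces the delicate comparison with $(A+B)^p$ by the trivial comparison with $I$, after which only the scalar bound $t^p\leq t$ on the unit interval is needed. A crude triangle-inequality estimate such as $\|A^p+B^p\|\leq\|A\|^p+\|B\|^p\leq 2\|A+B\|^p$ would only give a spurious factor of $2$, so the normalization trick is precisely what is needed to obtain the sharp constant $1$.
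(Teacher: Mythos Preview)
Your argument is correct. The paper itself does not prove this lemma; it is stated with a citation and then used as a black box in the proof of the main theorem. Your approach --- normalizing by homogeneity so that $\|A+B\|=1$, invoking the scalar inequality $t^p\le t$ on $[0,1]$ through the order-preserving functional calculus to obtain $A^p\le A$ and $B^p\le B$, and summing --- is the standard short proof and is entirely self-contained relative to the preliminaries already recorded in the introduction. Your closing remarks are also apt: the inequality $A^p+B^p\le(A+B)^p$ fails as an operator inequality already at $p=2$ (since $AB+BA$ need not be positive), so the passage to norms is essential, and the normalization is exactly what eliminates the spurious factor of $2$ that a crude triangle-inequality bound would introduce.
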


\begin{lemma}\label{main3}
Let $A,B>0$ and $\alpha>0$. Then $A\leq\alpha B$ if and only if
$\|A^\frac{1}{2}B^\frac{-1}{2}\|\leq \alpha^\frac{1}{2}.$
\end{lemma}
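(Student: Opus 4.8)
The plan is to reduce the operator inequality $A\leq\alpha B$ to a norm bound via a congruence transformation together with the $C^*$-identity. First I would use that, since $B>0$ is invertible, the operator $B^\frac{-1}{2}$ exists and is self-adjoint, so the map $X\mapsto B^\frac{-1}{2}XB^\frac{-1}{2}$ is an order isomorphism: one has $X\geq0$ if and only if $B^\frac{-1}{2}XB^\frac{-1}{2}\geq0$, the reverse implication being obtained by conjugating back with $B^\frac{1}{2}$. Applying this to $X=\alpha B-A$ shows that $A\leq\alpha B$ is equivalent to $\alpha I-B^\frac{-1}{2}AB^\frac{-1}{2}\geq0$, that is, to $B^\frac{-1}{2}AB^\frac{-1}{2}\leq\alpha I$.

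Next I would observe that $B^\frac{-1}{2}AB^\frac{-1}{2}$ is a positive operator, and that for any positive operator $P$ the relation $P\leq\alpha I$ holds if and only if $\|P\|\leq\alpha$, since the norm of a positive operator equals the supremum of its spectrum. Hence $A\leq\alpha B$ is equivalent to $\|B^\frac{-1}{2}AB^\frac{-1}{2}\|\leq\alpha$.

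Finally, setting $T=A^\frac{1}{2}B^\frac{-1}{2}$ and using that $A^\frac{1}{2}$ and $B^\frac{-1}{2}$ are self-adjoint, we have $T^*T=B^\frac{-1}{2}A^\frac{1}{2}A^\frac{1}{2}B^\frac{-1}{2}=B^\frac{-1}{2}AB^\frac{-1}{2}$, so the $C^*$-identity $\|T^*T\|=\|T\|^2$ gives $\|B^\frac{-1}{2}AB^\frac{-1}{2}\|=\|A^\frac{1}{2}B^\frac{-1}{2}\|^2$. Chaining the three equivalences yields $A\leq\alpha B\Longleftrightarrow\|A^\frac{1}{2}B^\frac{-1}{2}\|^2\leq\alpha\Longleftrightarrow\|A^\frac{1}{2}B^\frac{-1}{2}\|\leq\alpha^\frac{1}{2}$, which is exactly the asserted equivalence.

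I do not expect a genuine obstacle in this argument, as each step is a standard manipulation. The only point demanding care is the congruence step: the \emph{reverse} implication there relies on the invertibility of $B^\frac{-1}{2}$ (so that the transformation can be undone), which is precisely guaranteed by the hypothesis $B>0$; without invertibility only one direction of the order equivalence would survive.
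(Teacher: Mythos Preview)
Your proof is correct and follows essentially the same route as the paper's own proof: conjugate by $B^{-1/2}$ to reduce $A\leq\alpha B$ to $B^{-1/2}AB^{-1/2}\leq\alpha I$, then identify this with the norm bound $\|A^{1/2}B^{-1/2}\|^2\leq\alpha$ via the $C^*$-identity. The paper states these steps more tersely, but the argument is the same.
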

\begin{proof}
Obviously, $A\leq\alpha B$ if and only if
$B^{\frac{-1}{2}}AB^{\frac{-1}{2}}\leq\alpha$.  By definition, this holds if
and only if $\|A^{\frac{1}{2}}B^{\frac{-1}{2}}\|^2 \leq\alpha$ and
the proof is complete.
\end{proof}
\begin{lemma}\cite{hbt}\label{male2}
Let $0<m \leq A, B \leq M$,  $\Phi$ be a positive unital linear map and $\sigma$, $\tau$ be two arbitrary
means between harmonic and arithmetic means. Then
\begin{eqnarray*}
\Phi(A\sigma B)+Mm\Phi^{-1}(A\tau B)\leq M+m.
\end{eqnarray*}
\end{lemma}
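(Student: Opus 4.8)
The plan is to reduce the two general means $\sigma$ and $\tau$ to the extreme cases and then to combine Choi's inequality with an elementary scalar estimate. First I would exploit that $\sigma$ and $\tau$ lie between the harmonic and the arithmetic means, so that $A\sigma B\le A\nabla B$ and $A\tau B\ge A! B$. Since $\Phi$ is positive, hence monotone, these yield $\Phi(A\sigma B)\le \Phi(A\nabla B)$ and $\Phi(A\tau B)\ge \Phi(A! B)$; because the inverse map is order-reversing on the positive invertible operators and $\Phi(A\tau B)\ge\Phi(A!B)\ge mI>0$, inverting the second relation gives $\Phi^{-1}(A\tau B)\le \Phi^{-1}(A!B)$. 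Thus it suffices to establish the single inequality
\[
\Phi(A\nabla B)+Mm\,\Phi^{-1}(A!B)\le M+m .
\]

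Next I would treat the inverse term by Choi's inequality \eqref{coi}, applied to the positive operator $A!B$, which gives $\Phi^{-1}(A!B)=\Phi(A!B)^{-1}\le \Phi\big((A!B)^{-1}\big)$. Using the identity $(A!B)^{-1}=\tfrac{1}{2}(A^{-1}+B^{-1})=A^{-1}\nabla B^{-1}$ together with the linearity and unitality of $\Phi$, the left-hand side above is dominated by
\[
\Phi(A\nabla B)+Mm\,\Phi\big((A!B)^{-1}\big)=\Phi\Big(A\nabla B+Mm\,(A!B)^{-1}\Big).
\]

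It then remains to prove the operator inequality $A\nabla B+Mm(A!B)^{-1}\le (M+m)I$, after which the positivity and unitality of $\Phi$ finish the argument, since $\Phi((M+m)I)=(M+m)I$. To obtain this last inequality I would split
\[
A\nabla B+Mm(A!B)^{-1}=\tfrac{1}{2}\big(A+MmA^{-1}\big)+\tfrac{1}{2}\big(B+MmB^{-1}\big),
\]
and then apply, via the Gelfand functional calculus, the scalar estimate $t+Mm\,t^{-1}\le M+m$ valid for every $t\in[m,M]$ (the convex function $t\mapsto t+Mm/t$ attains its maximum over $[m,M]$ at the two endpoints, where its common value is $M+m$). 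Since $m\le A,B\le M$, this delivers $A+MmA^{-1}\le (M+m)I$ and $B+MmB^{-1}\le(M+m)I$, hence the claim.

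The only genuinely delicate point is the direction of the reduction step: because the second summand carries the order-reversing inverse, one must replace $\tau$ by the \emph{smallest} admissible mean $!$ (rather than the largest) in order to produce a valid upper bound, while $\sigma$ is simultaneously replaced by the largest admissible mean $\nabla$. Once this is correctly arranged, the remainder is a routine synthesis of the monotonicity of $\Phi$, Choi's inequality, and the scalar Kantorovich-type bound.
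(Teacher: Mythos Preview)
The paper does not supply its own proof of this lemma; it is quoted from \cite{hbt} and used as a black box. Your argument is correct, and in fact its ingredients are exactly those the paper later deploys in the proof of Theorem~\ref{lala12}: the scalar Kantorovich-type bound $t+Mm\,t^{-1}\le M+m$ on $[m,M]$ (yielding $A+MmA^{-1}\le M+m$ and $B+MmB^{-1}\le M+m$), linearity/positivity of $\Phi$, and Choi's inequality~\eqref{coi}. The one extra step you supply---and which the paper never needs because it only ever applies the lemma with explicit means---is the reduction of the general $\sigma,\tau$ to the extreme pair $(\nabla,!)$, and you have handled the order-reversal under inversion correctly.
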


In the next proposition we extend the inequalities \eqref{eq001}
and \eqref{eq002} to $p>2$ and the inequalities \eqref{123456789a}
and \eqref{123456789ab} to arbitrary means between  harmonic and arithmetic means.

\begin{proposition}
Let $0<m \leq A, B \leq M$, $\Phi$ be a positive unital linear map, $\sigma$, $\tau$ be two arbitrary
means between  harmonic and arithmetic means and $p>0$. Then
\begin{eqnarray*}
\Phi^p\left(A\sigma B\right)\Phi^{-p}\left(A\tau
B\right)+\Phi^{-p}\left(A\tau B\right) \Phi^p\left(A\sigma
B\right)\leq2\alpha^p
\end{eqnarray*}
where $\alpha=\max\left\{\frac{(M+m)^2}{4Mm},\frac{(M+m)^2}{4^\frac{1}{p}Mm}\right\}$.
\end{proposition}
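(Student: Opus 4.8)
The plan is to reduce the two–sided anticommutator bound to a single operator‑norm estimate, and then to establish that estimate from Lemma~\ref{male2} together with Lemmas~\ref{main} and \ref{main2}. Write $S=\Phi(A\sigma B)$ and $T=\Phi^{-1}(A\tau B)=\Phi(A\tau B)^{-1}$, so that $S,T>0$ (indeed $mI\le S,T^{-1}\le MI$) and the claimed inequality reads $S^pT^p+T^pS^p\le 2\alpha^p I$. Since $S^p$ and $T^p$ are positive, $Z:=S^pT^p+T^pS^p$ is self‑adjoint, whence $Z\le\|Z\|\,I$; moreover $\|Z\|\le\|S^pT^p\|+\|T^pS^p\|=2\|S^pT^p\|$, because $T^pS^p=(S^pT^p)^{*}$. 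Thus it suffices to prove the scalar estimate $\|S^pT^p\|\le\alpha^p$, and feeding this into $Z\le 2\|S^pT^p\|\,I$ will finish the proof.

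For the key estimate I would invoke Lemma~\ref{male2}, which gives $S+MmT\le(M+m)I$; setting $T'=MmT$ yields $S+T'\le(M+m)I$ and hence $\|S+T'\|\le M+m$. \emph{Case $p>1$.} By Lemma~\ref{main}, $\|S^pT'^p\|\le\frac14\|S^p+T'^p\|^2$, and by Lemma~\ref{main2}, $\|S^p+T'^p\|\le\|(S+T')^p\|=\|S+T'\|^p\le(M+m)^p$. Since $T'^p=(Mm)^pT^p$, this gives $\|S^pT^p\|=(Mm)^{-p}\|S^pT'^p\|\le\frac{(M+m)^{2p}}{4(Mm)^p}=\left(\frac{(M+m)^2}{4^{1/p}Mm}\right)^p$, which equals $\alpha^p$ because $4^{1/p}<4$ forces the maximum defining $\alpha$ to be attained at the second term.

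\emph{Case $0<p\le1$.} Here Lemma~\ref{main2} is unavailable, so I would apply Lemma~\ref{main} at the first power only: $\|ST'\|\le\frac14\|S+T'\|^2\le\frac14(M+m)^2$, whence $\|ST\|\le\frac{(M+m)^2}{4Mm}$. To transfer the exponent $p$ through the product I would use the Cordes inequality $\|S^pT^p\|\le\|ST\|^p$ (valid for $0<p\le1$ and $S,T\ge0$), obtaining $\|S^pT^p\|\le\left(\frac{(M+m)^2}{4Mm}\right)^p=\alpha^p$, since for $p\le1$ the maximum is now attained at the first term. In both cases $\|S^pT^p\|\le\alpha^p$, as required.

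The main obstacle is precisely the sub‑unital range $0<p\le1$. For $p>1$ the argument splits cleanly, since Lemma~\ref{main2} lets one pass from $S+T'\le(M+m)I$ to the bound $\|S^p+T'^p\|\le(M+m)^p$ on the powers; but Lemma~\ref{main2} fails for $p\le1$, so the exponent cannot be introduced \emph{after} summing. One must instead move $p$ through the product $ST$ itself, which is exactly the content of the Cordes‑type norm inequality for fractional powers (obtainable from the L\"owner--Heinz inequality \eqref{12345a} by a standard argument). Handling this range, and verifying that the two scalar bounds glue to give $\alpha=\max\left\{\frac{(M+m)^2}{4Mm},\frac{(M+m)^2}{4^{1/p}Mm}\right\}$ across the threshold $p=1$, is the only delicate point.
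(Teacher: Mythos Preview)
Your argument is correct. The overall architecture matches the paper's: first establish the scalar estimate $\|\Phi^p(A\sigma B)\,\Phi^{-p}(A\tau B)\|\le\alpha^p$, then deduce the anticommutator bound. Your reduction via $Z\le\|Z\|\,I\le 2\|S^pT^p\|\,I$ is equivalent to, and slightly more direct than, the paper's device of summing the two $2\times2$ block matrices $\begin{pmatrix}\alpha^pI&X\\X^*&\alpha^pI\end{pmatrix}\ge0$ associated with $X=S^pT^p$ and $X=T^pS^p$.

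The real difference lies in how the norm bound is obtained. The paper simply invokes the already-stated inequality \eqref{eq001} (at exponent $2p$) and converts it with Lemma~\ref{main3}, so no case analysis or Cordes inequality is needed: the split at $p=1$ in the proposition's constant is inherited automatically from the split at $2p=2$ in \eqref{eq001}. You instead rebuild the estimate from Lemma~\ref{male2} together with Lemmas~\ref{main} and \ref{main2}, which forces you to handle $0<p\le1$ separately via the Cordes inequality $\|S^pT^p\|\le\|ST\|^p$. That step is legitimate (it is the standard consequence of L\"owner--Heinz \eqref{12345a}), but it is an ingredient external to the paper. In effect you have unfolded the proof of \eqref{eq001} rather than quoting it; this makes your argument more self-contained at the cost of being longer and requiring the Cordes inequality, whereas the paper's route is shorter but leans on the cited result.
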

\begin{proof}
By \cite[Lemma 3.5.12]{horn} we have that $\|X\|\leq t$ if and
only if
 $\left(\begin{array}{cc}
 tI&X\\
 X^*&tI
 \end{array}\right)\geq0$, for any  $X\in{\mathbb B}({\mathscr
H})$. If $0<p\leq1$, then $\alpha=\frac{(M+m)^2}{4Mm}$. Applying inequality \eqref{eq001} and Lemma \ref{main3} we
get $$\|\Phi^p\left(A\sigma B\right)\Phi^{-p}\left(A\tau
B\right)\|\leq\alpha^{p}.$$
Hence
  $$\left(\begin{array}{cc}
 \alpha^{p}I&\Phi^p\left(A\sigma B\right)\Phi^{-p}\left(A\tau B\right)\\
 \Phi^{-p}\left(A\tau B\right)\Phi^p\left(A\sigma B\right)&\alpha^{p}I
 \end{array}\right)\geq0$$
  and
 $$\left(\begin{array}{cc}
 \alpha^{p}I&\Phi^{-p}\left(A\tau B\right)\Phi^p\left(A\sigma B\right)\\
 \Phi^p\left(A\sigma B\right)\Phi^{-p}\left(A\tau B\right)&\alpha^{p}I
 \end{array}\right)\geq0.$$
 Hence
 {\footnotesize\begin{align*}\left(\begin{array}{cc}
 {2}\alpha^{p}I&\Phi^{-p}\left(A\tau B\right)
 \Phi^p\left(A\sigma B\right)+\Phi^p\left(A\sigma B\right)\Phi^{-p}\left(A\tau B\right)\\
 \Phi^p\left(A\sigma B\right)\Phi^{-p}\left(A\tau B\right)+\Phi^{-p}\left(A\tau B\right)
 \Phi^p\left(A\sigma B\right)&{2}\alpha^{p}I
 \end{array}\right)
 \end{align*}}
 is positive and the desired inequality for $0<p\leq1$. Using inequality \eqref{eq001} with the same argument, we get the desired inequality for $p>1$.
\end{proof}
Now, we are ready to present our main result. We need the
following lemma, proved in \cite{kit}; (see also \cite{m-m21}).
\begin{lemma}\cite{kit}
Let $ a,b>0$ and $\nu\in[0,1]$. Then
\begin{align}\label{rif-y}
a^{1-\nu} b^\nu+r(\sqrt{a}-\sqrt{b})^2 \leq(1-\nu) a+\nu b,
\end{align}
where $r=\min\{\nu,1-\nu\}.$
\end{lemma}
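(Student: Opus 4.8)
The plan is to peel off the two-variable structure and reduce everything to a one-variable concavity estimate. First I would exploit the symmetry of the statement: interchanging the roles of $a$ and $b$ sends $\nu\mapsto 1-\nu$ but leaves both $(\sqrt a-\sqrt b)^2$ and $r=\min\{\nu,1-\nu\}$ unchanged. Consequently it is enough to handle the range $0\le\nu\le\tfrac12$, in which $r=\nu$; the case $\tfrac12\le\nu\le1$ then follows at once by this interchange.

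With $r=\nu$ I would expand $r(\sqrt a-\sqrt b)^2=\nu a-2\nu\sqrt{ab}+\nu b$ and collect terms, so that \eqref{rif-y} becomes equivalent to
\begin{align*}
(1-2\nu)a+2\nu\sqrt{ab}\ge a^{1-\nu}b^\nu.
\end{align*}
Because $a>0$, I would then divide through by $a$ and set $s=\sqrt{b/a}>0$, which recasts the claim as the single-variable inequality
\begin{align*}
s^{2\nu}\le(1-2\nu)+2\nu s\qquad(s>0).
\end{align*}

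The last inequality I would settle by the tangent-line (concavity) argument. Since $0\le 2\nu\le1$, the map $\phi(s)=s^{2\nu}$ is concave on $(0,\infty)$ and hence lies below its tangent line at every point; the tangent at $s=1$ is $\phi(1)+\phi'(1)(s-1)=(1-2\nu)+2\nu s$, which is precisely the right-hand side, so $\phi(s)\le(1-2\nu)+2\nu s$ with equality exactly when $s=1$, that is, when $a=b$. I do not anticipate a serious obstacle here; the only step needing genuine care is the symmetry reduction that legitimizes replacing $r$ by $\nu$ throughout, and if one preferred to avoid it one could simply run the same concavity estimate separately in the two regimes $\nu\le\tfrac12$ and $\nu\ge\tfrac12$.
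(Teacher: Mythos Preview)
Your argument is correct: the symmetry reduction to $\nu\le\tfrac12$ is legitimate, the algebraic rearrangement to $(1-2\nu)a+2\nu\sqrt{ab}\ge a^{1-\nu}b^\nu$ is clean, and the final step is nothing more than the classical Young (weighted AM--GM) inequality $s^{2\nu}\cdot 1^{1-2\nu}\le 2\nu s+(1-2\nu)$ for $2\nu\in[0,1]$, which your concavity/tangent-line phrasing establishes with equality precisely at $s=1$.

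There is nothing in the paper to compare against: the lemma is quoted from \cite{kit} (Kittaneh--Manasrah) without proof, so the authors rely on that external reference rather than supplying an argument of their own. Your proof is in fact essentially the original one given by Kittaneh and Manasrah, so you have reproduced the intended source argument.
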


\begin{theorem}\label{lala12}
Let $0<m \leq A, B \leq M$, $\Phi$ be a positive unital linear
map, $\nu\in[0,1]$ and $p>0$. Then
\begin{align}\label{eq-main}
\Phi^p\left(A\nabla_\nu B+2rMm(A^{-1}\nabla B^{-1}-A^{-1}\sharp B^{-1})\right)\leq\alpha^p\Phi^p\left(A\sharp_\nu B\right)
\end{align}
and
\begin{align}\label{eq-main1}
\Phi^p\left(A\nabla_\nu B+2rMm(A^{-1}\nabla B^{-1}-A^{-1}\sharp
B^{-1})\right)\leq\alpha^p\left(\Phi\left(A\right)\sharp_\nu
\Phi\left(B\right)\right)^p,
\end{align}
where $r=\min\{\nu,1-\nu\}$ and
$\alpha=\max\left\{\frac{(M+m)^2}{4Mm},\frac{(M+m)^2}{4^\frac{2}{p}Mm}\right\}$.
\end{theorem}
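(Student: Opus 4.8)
The plan is to reduce everything to the single operator inequality
\[
\Phi\big(A\nabla_\nu B + 2rMm(A^{-1}\nabla B^{-1} - A^{-1}\sharp B^{-1})\big) + Mm\,\Phi^{-1}(A\sharp_\nu B) \le M+m \qquad (\star)
\]
and then to feed $(\star)$ into the norm machinery of Lemmas \ref{main}, \ref{main2} and \ref{main3} together with the L\"owner--Heinz inequality \eqref{12345a}. Writing $C$ for the refined left-hand argument $A\nabla_\nu B + 2rMm(A^{-1}\nabla B^{-1} - A^{-1}\sharp B^{-1})$, inequality \eqref{eq-main} will come out of $(\star)$ directly, while \eqref{eq-main1} will follow once the geometric-mean estimate $\Phi(A\sharp_\nu B)\le\Phi(A)\sharp_\nu\Phi(B)$ is invoked.

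To prove $(\star)$ I would first apply Choi's inequality \eqref{coi} to $A\sharp_\nu B$, using $(A\sharp_\nu B)^{-1}=A^{-1}\sharp_\nu B^{-1}$, to obtain $\Phi^{-1}(A\sharp_\nu B)\le\Phi(A^{-1}\sharp_\nu B^{-1})$. Next I would turn the scalar refinement \eqref{rif-y} into an operator refinement for the inverses: putting $Y=A^{1/2}B^{-1}A^{1/2}$ and applying \eqref{rif-y} with $a=1$ and $b$ an eigenvalue of $Y$ gives $Y^\nu + r(I-Y^{1/2})^2\le(1-\nu)I+\nu Y$, and conjugating by $A^{-1/2}$ while identifying $A^{-1/2}Y^{1/2}A^{-1/2}=A^{-1}\sharp B^{-1}$ and $A^{-1/2}YA^{-1/2}=B^{-1}$ yields
\[
A^{-1}\sharp_\nu B^{-1} + 2r\big(A^{-1}\nabla B^{-1}-A^{-1}\sharp B^{-1}\big)\le A^{-1}\nabla_\nu B^{-1}.
\]
Applying $\Phi$ and multiplying by $Mm$, the term $-2rMm\,\Phi(A^{-1}\nabla B^{-1}-A^{-1}\sharp B^{-1})$ it produces is precisely the negative of the refinement term sitting inside $\Phi(C)$, so the two cancel and only $\Phi(A\nabla_\nu B + Mm\,A^{-1}\nabla_\nu B^{-1})$ survives. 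The proof of $(\star)$ then closes with the scalar Kantorovich estimate $t+\tfrac{Mm}{t}\le M+m$ on $[m,M]$, which gives $A+MmA^{-1}\le(M+m)I$ and $B+MmB^{-1}\le(M+m)I$; taking the $\nu$-weighted combination and using that $\Phi$ is unital forces $\Phi(A\nabla_\nu B + Mm\,A^{-1}\nabla_\nu B^{-1})\le(M+m)I$.

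With $(\star)$ in hand I would split into two ranges. For $0<p\le2$ one has $\alpha=\frac{(M+m)^2}{4Mm}$: Lemma \ref{main} applied to $\Phi(C)$ and $Mm\Phi^{-1}(A\sharp_\nu B)$ converts $(\star)$ into $\|\Phi(C)\Phi^{-1}(A\sharp_\nu B)\|\le\alpha$, Lemma \ref{main3} upgrades this to $\Phi^2(C)\le\alpha^2\Phi^2(A\sharp_\nu B)$, and since $p/2\le1$ the L\"owner--Heinz inequality \eqref{12345a} raises it to the power $p/2$ to deliver \eqref{eq-main}. For $p>2$ one has $\alpha=\frac{(M+m)^2}{4^{2/p}Mm}$, and here I would estimate $\|\Phi^{p/2}(C)\Phi^{-p/2}(A\sharp_\nu B)\|$ by combining Lemma \ref{main} with Lemma \ref{main2} (legitimate since $p/2>1$) and the bound $\|(\Phi(C)+Mm\Phi^{-1}(A\sharp_\nu B))^{p/2}\|\le(M+m)^{p/2}$ coming from $(\star)$; a short computation shows the resulting quantity equals exactly $\alpha^{p/2}$, so Lemma \ref{main3} again gives \eqref{eq-main}. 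Finally \eqref{eq-main1} follows from the same chain upon replacing $\Phi^{-1}(A\sharp_\nu B)$ by $(\Phi(A)\sharp_\nu\Phi(B))^{-1}$, which is permissible because joint operator concavity of the weighted geometric mean under a positive unital map yields $\Phi(A\sharp_\nu B)\le\Phi(A)\sharp_\nu\Phi(B)$ and hence $(\Phi(A)\sharp_\nu\Phi(B))^{-1}\le\Phi^{-1}(A\sharp_\nu B)$.

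The main obstacle I anticipate is the passage from the scalar refinement \eqref{rif-y} to its operator form for the inverses, together with the observation that the extra summand $2rMm(A^{-1}\nabla B^{-1}-A^{-1}\sharp B^{-1})$ is tuned exactly so as to cancel the refinement gap in $(\star)$; once $(\star)$ is secured the remainder is the now-standard Lin--Fu--He block-matrix and L\"owner--Heinz routine.
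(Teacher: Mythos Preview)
Your proposal is correct and follows essentially the same route as the paper: you first establish the key operator bound $(\star)$ via Choi's inequality, the operator version of \eqref{rif-y} for $A^{-1},B^{-1}$, and the Kantorovich identity $t+Mm/t\le M+m$ on $[m,M]$, and then run the Bhatia--Kittaneh/Lin norm machinery (Lemmas \ref{main}, \ref{main2}, \ref{main3} plus L\"owner--Heinz) in the two ranges $0<p\le 2$ and $p>2$, passing to \eqref{eq-main1} via $\Phi(A\sharp_\nu B)\le\Phi(A)\sharp_\nu\Phi(B)$. The only cosmetic difference is that the paper carries the Choi and refinement steps inside the norm chain rather than isolating $(\star)$ beforehand; the content is identical.
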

\begin{proof}

We prove first the inequalities \eqref{eq-main} and
\eqref{eq-main1} for  $0<p\leq 2$. Since $0<m \leq A, B \leq M$ we
get that
\begin{equation*}
A+MmA^{-1}\leq M+m\quad{\rm~and~}\quad B+MmB^{-1}\leq M+m.
\end{equation*}
Therefore, for a positive unital linear map $\Phi$ we have
\begin{equation*}
\Phi(A)+Mm\Phi(A^{-1})\leq M+m
\end{equation*}
and
\begin{equation*}
\Phi(B)+Mm\Phi(B^{-1})\leq M+m.
\end{equation*}
Obviously we have also the inequalities
\begin{equation*}
\Phi((1-\nu) A)+Mm\Phi((1-\nu) A^{-1})\leq (1-\nu) M+(1-\nu) m
\end{equation*}
and
\begin{equation*}
\Phi(\nu B)+Mm\Phi(\nu B^{-1})\leq \nu M+\nu m.
\end{equation*}
for any $\nu\in[0,1]$. Summing up, we therefore get
\begin{equation}\label{eq3}
\Phi(A\nabla_\nu B)+Mm\Phi((1-\nu) A^{-1}+\nu B^{-1})\leq M+m.
\end{equation}
Moreover, by using the inequality \eqref{rif-y} and functional
calculus for the positive operator
$A^{\frac{1}{2}}B^{-1}A^{\frac{1}{2}}$  we have
\begin{equation*}
 \left(A^{\frac{1}{2}}B^{-1}A^{\frac{1}{2}}\right)^\nu+r\left(A^{\frac{1}{2}}B^{-1}A^{\frac{1}{2}}+1-2
 \left(A^{\frac{1}{2}}B^{-1}A^{\frac{1}{2}}\right)^\frac{1}{2}\right)
 \leq (1-\nu)+\nu A^{\frac{1}{2}}B^{-1}A^{\frac{1}{2}}.
\end{equation*}
Multiplying both sides of the above inequality both to the left
and to the right by $A^{\frac{-1}{2}}$ we get that
\begin{equation}\label{refin1}
 A^{-1}\sharp_\nu B^{-1}+2r\left(A^{-1}\nabla B^{-1}-A^{-1}\sharp B^{-1}\right)\leq (1-\nu) A^{-1}+\nu B^{-1}.
\end{equation}
Applying \eqref{coi}, \eqref{eq3}, \eqref{refin1} and taking into
account the properties of $\Phi$ we have
\begin{align*}
&\left\|\Phi\left(A\nabla_\nu B+2rMm(A^{-1}\nabla B^{-1}-A^{-1}
\sharp B^{-1})\right)Mm\Phi^{-1}(A\sharp_\nu B)\right\|
\\&\leq\frac{1}{4}\left\|\Phi(A\nabla_\nu B+2rMm(A^{-1}\nabla B^{-1}-A^{-1}\sharp B^{-1}))
+Mm\Phi^{-1}(A\sharp_\nu
B)\right\|^2\\&\qquad\qquad\qquad\qquad\qquad (\textrm{by
Lemma\,\,\ref{main}})\\&\leq\frac{1}{4}\left\|\Phi(A\nabla_\nu
B+2rMm(A^{-1}\nabla B^{-1}-A^{-1}\sharp
B^{-1}))+Mm\Phi\big(A^{-1}\sharp_\nu
B^{-1}\big)\right\|^2\\&\qquad\qquad\qquad\qquad\qquad\qquad\qquad
(\textrm{by
inequality\,\,\eqref{coi}})\\&=\frac{1}{4}\left\|\Phi(A\nabla_\nu
B)+Mm\Phi(A^{-1}\sharp_\nu B^{-1}+2r(A^{-1}\nabla
B^{-1}-A^{-1}\sharp
B^{-1}))\right\|^2\\&\leq\frac{1}{4}\left\|\Phi(A\nabla_\nu
B)+Mm\Phi( (1-\nu) A^{-1}+\nu B^{-1})\right\|^2\quad(\textrm{by
inequality\,\,\eqref{refin1}})\\&\leq\frac{1}{4}
(M+m)^2\qquad\qquad(\textrm{by inequality\,\,\eqref{eq3}}).
\end{align*}
Therefore
 \begin{align}\label{main-man}
\left\|\Phi\left(A\nabla_\nu B+2rMm(A^{-1}\nabla
B^{-1}-A^{-1}\sharp B^{-1})\right)\Phi^{-1}(A\sharp_\nu
B)\right\|\leq \frac{(M+m)^2}{4Mm}.
\end{align}
Hence
\begin{eqnarray*}
\Phi^2\left(A\nabla_\nu B+2rMm(A^{-1}\nabla B^{-1}-A^{-1}
\sharp B^{-1})\right)\leq\left(\frac{(M+m)^2}{4Mm}\right)^2\Phi^2\left(A\sharp_\nu B\right).
\end{eqnarray*}
Since $0<p/2\le 1$, by inequality \eqref{12345a} we have
 \begin{eqnarray*}
\Phi^p\left(A\nabla_\nu B+2rMm(A^{-1}\nabla B^{-1}-A^{-1}
\sharp B^{-1})\right)\leq\left(\frac{(M+m)^2}{4Mm}\right)^p\Phi^p\left(A\sharp_\nu B\right).
\end{eqnarray*}
Thus we get the inequality \eqref{eq-main} for $0<p\leq2$. We prove
now \eqref{eq-main1} for $0<p\leq2$. Applying Lemma \ref{main} and
then inequality \eqref{eq-main} we have
\begin{align*}
&\left\|\Phi\left(A\nabla_\nu B+2rMm(A^{-1}\nabla B^{-1}-A^{-1}\sharp B^{-1})
\right)Mm(\Phi(A)\sharp_\nu \Phi(B))^{-1}\right\|\\&\leq\frac{1}{4}\left\|
\Phi(A\nabla_\nu B+2rMm(A^{-1}\nabla B^{-1}-A^{-1}\sharp B^{-1}))+Mm(\Phi(A)\sharp_\nu \Phi(B))^{-1}\right\|^2
\\&\qquad\qquad (\textrm{by Lemma\,\,\ref{main}})\\&\leq\frac{1}{4}\left\|
\Phi(A\nabla_\nu B+2rMm(A^{-1}\nabla B^{-1}-A^{-1}\sharp B^{-1}))+Mm\Phi^{-1}(A\sharp_\nu B)
\right\|^2\\&\leq\frac{1}{4} (M+m)^2\qquad(\textrm{by inequality\,\,\eqref{main-man}}).
\end{align*}
Hence the inequality \eqref{eq-main1} for $0<p\leq2$. \\
Now, we prove the inequalities \eqref{eq-main}
and \eqref{eq-main1} for $p>2$. Then, by Lemma \ref{main} and
\ref{main2} we get
\begin{align*}
M^\frac{p}{2}&m^\frac{p}{2}\left\|\Phi^\frac{p}{2}\left(A\nabla_\nu
B+2rMm(A^{-1}\nabla B^{-1}-A^{-1}\sharp
B^{-1})\right)\Phi^\frac{-p}{2} (A\sharp_\nu
B)\right\|\\&=\left\|\Phi^\frac{p}{2}\left(A\nabla_\nu
B+2rMm(A^{-1}\nabla B^{-1}-A^{-1}\sharp B^{-1})\right)
M^\frac{p}{2}m^\frac{p}{2}\Phi^\frac{-p}{2}(A\sharp_\nu
B)\right\|\\&\leq\frac{1}{4}\left\|\Phi^\frac{p}{2} (A\nabla_\nu
B+2rMm(A^{-1}\nabla B^{-1}-A^{-1}\sharp
B^{-1}))+M^\frac{p}{2}m^\frac{p}{2}\Phi^\frac{-p}{2} (A\sharp_\nu
B)\right\|^2
\\&\leq\frac{1}{4}\left\|\Big(\Phi(A\nabla_\nu
B+2rMm(A^{-1}\nabla B^{-1}-A^{-1}\sharp
B^{-1}))+Mm\Phi^{-1}(A\sharp_\nu
B)\Big)^\frac{p}{2}\right\|^2\\&\qquad
\\&=\frac{1}{4}\left\|\Phi(A\nabla_\nu
B+2rMm(A^{-1}\nabla B^{-1}-A^{-1}\sharp
B^{-1}))+Mm\Phi^{-1}(A\sharp_\nu B)\right\|^p\\&\qquad
\\&\leq\frac{1}{4}
(M+m)^p.
\end{align*}

Hence we get the inequality \eqref{eq-main} for $p>2$. Further, we
have
\begin{align*}
M^\frac{p}{2}&m^\frac{p}{2}\left\|\Phi^\frac{p}{2}\left(A\nabla_\nu
 B+2rMm(A^{-1}\nabla B^{-1}-A^{-1}\sharp B^{-1})\right)\left(\Phi\left(A\right)
 \sharp_\nu \Phi\left(B\right)\right)^\frac{-p}{2}\right\|\\&=\left\|\Phi^\frac{p}{2}
 \left(A\nabla_\nu B+2rMm(A^{-1}\nabla B^{-1}-A^{-1}\sharp B^{-1})\right)
 M^\frac{p}{2}m^\frac{p}{2}\left(\Phi\left(A\right)\sharp_\nu \Phi\left(B\right)\right)
 ^\frac{-p}{2}\right\|\\&\leq\frac{1}{4}\left\|\Phi^\frac{p}{2}
 (A\nabla_\nu B+2rMm(A^{-1}\nabla B^{-1}-A^{-1}\sharp B^{-1}))+
 M^\frac{p}{2}m^\frac{p}{2}\left(\Phi\left(A\right)\sharp_\nu \Phi\left(B\right)\right)
 ^\frac{-p}{2}\right\|^2\\&\leq\frac{1}{4}
 \left\|\Big(\Phi(A\nabla_\nu B+2rMm(A^{-1}\nabla B^{-1}-A^{-1}\sharp B^{-1}))
 +Mm\left(\Phi\left(A\right)\sharp_\nu \Phi\left(B\right)\right)^{-1}\Big)
 ^\frac{p}{2}\right\|^2
 \\&=\frac{1}{4}\left\|\Phi(A\nabla_\nu B+2rMm(A^{-1}\nabla B^{-1}-A^{-1}\sharp B^{-1}))
 +Mm\left(\Phi\left(A\right)\sharp_\nu \Phi\left(B\right)\right)^{-1}\right\|^p
 \\&\leq\frac{1}{4}\left\|\Phi(A\nabla_\nu B+2rMm(A^{-1}\nabla B^{-1}-A^{-1}\sharp B^{-1}))
 +Mm\Phi^{-1}(A\sharp_\nu B)\right\|^p\\&\leq\frac{1}{4} (M+m)^p.
\end{align*}
Thus we get the  inequality \eqref{eq-main1} for $p>2$ and this
completes the proof of the theorem.
\end{proof}

\begin{remark}
Let $0<m \leq A, B \leq M$, $\Phi$ be a positive unital linear
map. If $0<p\leq1$, then, obviously,
\begin{eqnarray}\label{fafa}
\Phi^p\left(A\nabla_\nu B\right)\leq
\left(\Phi\left(A\nabla_\nu B\right)+2rMm\Phi\left(A^{-1}\nabla
 B^{-1}-A^{-1}\sharp B^{-1}\right)\right)^p.
\end{eqnarray}
Hence the inequality \eqref{fafa} shows that Theorem \ref{lala12}
is a refinement of inequalities \eqref{lala1} and \eqref{lala2}
for $0<p\leq1$. \\
We also have
\begin{eqnarray*}
\Phi^p\left(A\nabla_\nu B\right)\leq\Phi^p\left(A\nabla_\nu B\right)+(2rMm)^p\Phi^p\left(A^{-1}\nabla B^{-1}-A^{-1}\sharp B^{-1}\right),
\end{eqnarray*}
where $p\ge 1$,  $\nu\in[0,1]$ and $r=\min\{\nu,1-\nu\}$.

Hence
\begin{align*}
\left\|\Phi^p\left(A\nabla_\nu
B\right)\right\|&\leq\left\|\Phi^p\left(A\nabla_\nu B\right)
+(2rMm)^p\Phi^p\left(A^{-1}\nabla B^{-1}-A^{-1}\sharp
B^{-1}\right)\right\|\\&\leq\left \|\Phi^p\Big(A\nabla_\nu
B+2rMmA^{-1}\nabla B^{-1}-A^{-1}\sharp
B^{-1}\Big)\right\|\qquad(\textrm{by Lemma\,\,\ref{main2}}).
\end{align*}

Therefore, Theorem \ref{lala12} is a refinement of the
inequalities, \eqref{123456789a} and
\eqref{123456789ab} for the operator norm and $p\ge 2$.
\end{remark}

The following examples show that inequality \eqref{eq-main} is a
refinement of \eqref{lala1} and \eqref{123456789a}.
\begin{example}
If   $A=\left(\begin{array}{cc}
         1.75 &  0.433\\
0.433  & 1.25
 \end{array}\right)$, $B=\left(\begin{array}{cc}
     2.5 &  0.5\\
0.5 & 2.5
 \end{array}\right)$, $\Phi(X)=\frac{1}{2}\rm{tr}(X)\,\,(X\in\mathbb{M}_2)$, $m=1$, $M=3$, $\nu=\frac{1}{2}$ and $p=3$, then
$ A\nabla_\nu B=\left(\begin{array}{cc}
  2.1250  &  0.4665\\
    0.4665 &   1.8750
 \end{array}\right)$ and
  $A\nabla_\nu B+2rMm\left(A^{-1}\nabla B^{-1}-A^{-1}\sharp B^{-1}\right)=\left(\begin{array}{cc}
  2.1601  &  0.4260\\
    0.4260  &  2.0016
 \end{array}\right)$. Hence
 {\footnotesize\begin{align*}
 \Phi^3\left(A\nabla_\nu B+2rMm\left(A^{-1}\nabla B^{-1}-A^{-1}\sharp B^{-1}\right)\right)-\Phi^3(A\nabla_\nu B)=9.0095-8=1.0095>0.
  \end{align*}}
\end{example}
\begin{example}
Let  $\Phi(X)=T^*XT\,\,(X\in\mathbb{M}_2)$, where $T=\left(\begin{array}{cc}
         \frac{\sqrt{2}}{2} &  \frac{\sqrt{2}}{2}\\
-\frac{\sqrt{2}}{2}  & \frac{\sqrt{2}}{2}
 \end{array}\right)$. If  $A=\left(\begin{array}{cc}
         5 &  -2\\
-2  & 5
 \end{array}\right)$, $B=\left(\begin{array}{cc}
     4.75&    0.433\\
0.433&    4.25
 \end{array}\right)$, $m=3$, $M=7$, $\nu=\frac{1}{2}$ and $p=\frac{5}{3}$, then
$ A\nabla_\nu B=\left(\begin{array}{cc}
   4.8750  & -0.7835\\
   -0.7835  &  4.6250
 \end{array}\right)$ and
  $A\nabla_\nu B+2rMm\left(A^{-1}\nabla B^{-1}-A^{-1}\sharp B^{-1}\right)=\left(\begin{array}{cc}
    5.0283  & -0.7730\\
   -0.7730   & 4.7909
 \end{array}\right)$. Hence
 {\footnotesize\begin{align*}
 \Phi^\frac{5}{3}\left(A\nabla_\nu B+2rMm\left(A^{-1}\nabla B^{-1}-A^{-1}\sharp B^{-1}\right)\right)-\Phi^\frac{5}{3}(A\nabla_\nu B)=\left(\begin{array}{cc}
   0.7838 & -1.0172\\
   -1.0172   & 0.7199
 \end{array}\right)>0.
  \end{align*}}
 \end{example}

\begin{corollary}
Let $0<m \leq A, B \leq M$ and $\Phi$ be a positive unital linear map. Then
\begin{eqnarray*}
\Phi^p\left(\frac{A+B}{2}+Mm(A^{-1}\nabla B^{-1}-A^{-1}\sharp
B^{-1})\right)\leq{\alpha}^p\Phi^p\left(A\sharp B\right)
\end{eqnarray*}
and
\begin{eqnarray*}
\Phi^p\left(\frac{A+B}{2}+Mm(A^{-1}\nabla B^{-1}-A^{-1}\sharp
B^{-1})\right)\leq{\alpha}^p\left(\Phi\left(A\right)\sharp
\Phi\left(B\right)\right)^p.
\end{eqnarray*}
\end{corollary}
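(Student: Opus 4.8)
The plan is to obtain this corollary as the special case $\nu=\frac12$ of Theorem \ref{lala12}. First I would record what each ingredient of \eqref{eq-main} and \eqref{eq-main1} becomes at $\nu=\frac12$. Since $A\nabla_\nu B=(1-\nu)A+\nu B$, at $\nu=\frac12$ this reduces to the unweighted arithmetic mean $A\nabla_{1/2}B=\frac{A+B}{2}$; similarly $A\sharp_{1/2}B=A\sharp B$ and $\Phi(A)\sharp_{1/2}\Phi(B)=\Phi(A)\sharp\Phi(B)$, so all the weighted means collapse to the ordinary ones appearing in the corollary.

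Next I would handle the correction term. With $\nu=\frac12$ we have $r=\min\{\nu,1-\nu\}=\min\{\tfrac12,\tfrac12\}=\tfrac12$, hence the coefficient $2rMm$ in the bracket becomes exactly $Mm$. Thus the left-hand operator $A\nabla_\nu B+2rMm\bigl(A^{-1}\nabla B^{-1}-A^{-1}\sharp B^{-1}\bigr)$ turns into $\frac{A+B}{2}+Mm\bigl(A^{-1}\nabla B^{-1}-A^{-1}\sharp B^{-1}\bigr)$, matching the argument of $\Phi^p$ on the left-hand sides of both asserted inequalities.

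Finally I would observe that the constant $\alpha=\max\bigl\{\frac{(M+m)^2}{4Mm},\frac{(M+m)^2}{4^{2/p}Mm}\bigr\}$ is literally the same in the hypotheses of Theorem \ref{lala12} and in the statement of the corollary, so no recomputation of $\alpha$ is needed. Substituting $\nu=\frac12$ into \eqref{eq-main} then yields the first displayed inequality verbatim, and substituting into \eqref{eq-main1} yields the second. There is essentially no obstacle here: the corollary is a direct specialization, and the only point worth checking is precisely the arithmetic $2r=2\cdot\frac12=1$ that turns $2rMm$ into $Mm$ and the degeneration of the weighted means, both of which are immediate.
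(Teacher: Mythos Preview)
Your proposal is correct and matches the paper's own proof exactly: the corollary follows by setting $\nu=\tfrac12$ (hence $r=\tfrac12$) in Theorem~\ref{lala12}, which collapses the weighted means to the unweighted ones and turns $2rMm$ into $Mm$. There is nothing to add.
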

\begin{proof}
Take $r=\nu=\frac{1}{2}$ in Theorem \ref{lala12}.
\end{proof}

If the positive unital linear map $\Phi(A)=A\,\,(A\in{\mathbb
B}({\mathscr H}))$, then we get from Theorem \ref{lala12} the
following reverse AM--GM inequalities, which improve the reversed
AM--GM inequality \eqref{mos1}.

\begin{corollary}
Let $0<m \leq A, B \leq M$. Then, the inequalities
\begin{eqnarray*}
\left(\frac{A+B}{2}+Mm(A^{-1}\nabla B^{-1}-A^{-1}\sharp
B^{-1})\right)^p\leq
\left(\frac{(M+m)^2}{4Mm}\right)^p\left(A\sharp
B\right)^p\,\,\,\,(0< p\leq 2)
\end{eqnarray*}
and
\begin{eqnarray*}
\left(\frac{A+B}{2}+Mm(A^{-1}\nabla B^{-1}-A^{-1}\sharp
B^{-1})\right)^p\leq
\left(\frac{(M+m)^2}{4^{2/p}Mm}\right)^p\left(A\sharp
B\right)^p\,\,\,\,(p>2).
\end{eqnarray*}
hold.
\end{corollary}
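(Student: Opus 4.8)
The plan is simply to specialize Theorem \ref{lala12}: the corollary is exactly the case of the identity map $\Phi(X)=X$ together with the balanced weight $\nu=\frac{1}{2}$, with the maximum defining $\alpha$ resolved according to the size of $p$. So first I would take $\Phi(X)=X$, which is a positive unital linear map, and $\nu=\frac{1}{2}$ in the theorem. Then $r=\min\{\frac{1}{2},\frac{1}{2}\}=\frac{1}{2}$, so that $2rMm=Mm$, while $A\nabla_{1/2}B=\frac{A+B}{2}$ and $A\sharp_{1/2}B=A\sharp B$.

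With $\Phi$ the identity, the two right-hand sides $\alpha^p\Phi^p(A\sharp_\nu B)$ and $\alpha^p(\Phi(A)\sharp_\nu\Phi(B))^p$ of \eqref{eq-main} and \eqref{eq-main1} both reduce to $\alpha^p(A\sharp B)^p$, so the two inequalities of the theorem coincide and give
\begin{equation*}
\left(\frac{A+B}{2}+Mm(A^{-1}\nabla B^{-1}-A^{-1}\sharp B^{-1})\right)^p\leq\alpha^p\left(A\sharp B\right)^p .
\end{equation*}
It then remains only to read off $\alpha=\max\left\{\frac{(M+m)^2}{4Mm},\frac{(M+m)^2}{4^{2/p}Mm}\right\}$ on each range of $p$. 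Since the two candidates differ only through the denominators $4$ and $4^{2/p}$, comparing them amounts to comparing $4$ with $4^{2/p}$. For $0<p\leq 2$ one has $2/p\geq 1$, hence $4^{2/p}\geq 4$ and the first entry is the maximum, so $\alpha=\frac{(M+m)^2}{4Mm}$; for $p>2$ one has $2/p<1$, hence $4^{2/p}<4$ and the second entry is the maximum, so $\alpha=\frac{(M+m)^2}{4^{2/p}Mm}$. Substituting these two values of $\alpha$ into the displayed inequality yields exactly the two asserted estimates.

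I do not expect any genuine obstacle here, since all of the analytic work is already carried out in Theorem \ref{lala12} and the corollary is a direct specialization. The only things to verify are the elementary identities forced by $\nu=\frac{1}{2}$ and the trivial monotonicity of $p\mapsto 4^{2/p}$ that decides which term realizes the maximum. The single point worth making explicit is that the choice $\Phi=\mathrm{id}$ collapses the two distinct geometric-mean forms \eqref{eq-main} and \eqref{eq-main1} into the one inequality recorded in the corollary.
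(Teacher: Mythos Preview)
Your proposal is correct and matches the paper's approach: the corollary is obtained from Theorem~\ref{lala12} by taking $\Phi$ to be the identity map together with $\nu=\tfrac12$, and then resolving the maximum defining $\alpha$ according to whether $p\le 2$ or $p>2$. Your explicit verification of which term realizes the maximum is a helpful addition that the paper leaves implicit.
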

The operator P\'olya--Szeg\"o inequality states that
\begin{align}\label{polya}
\Phi(A)\sharp\Phi(B)\leq \frac{M+m}{2\sqrt{mM}}\Phi(A\sharp B).
\end{align}
where $0<m_1^2\leq A\leq M_1^2$, $0<m_2^2\leq B\leq M_2^2$, $m={m_2\over M_1}$ and  $M={M_1\over m_2}$. Also the operator Kantorovich inequality says that
\begin{align}\label{kantoro}
\Phi(A)\sharp\Phi(A^{-1})\leq \frac{M^2+m^2}{2{mM}},
\end{align}
where $0<m_1^2\leq A\leq M_1^2$, $0<m_2^2\leq B\leq M_2^2$, $m={m_2\over M_1}$, $M={M_1\over m_2}$; see \cite{kmm}. \\
In the following result we show some refinements of  \eqref{polya} and \eqref{kantoro}.
\begin{theorem}
Let $\Phi$ be a unital positive linear map, $0<m_1^2\leq A\leq M_1^2$, $0<m_2^2\leq B\leq M_2^2$, $m={m_2\over M_1}$, $M={M_1\over m_2}$.
{\footnotesize\begin{align}\label{haha}
\Phi(A)\sharp\Phi(B)+\frac{1}{2}\left(\sqrt{Mm}\Phi(A)+\frac{1}{{\sqrt{Mm}}}\Phi(B)-2\left(\Phi(A)\sharp\Phi(B)\right)\right)\leq \frac{M+m}{2\sqrt{mM}}\Phi(A\sharp B).
\end{align}}
In particular, if $B=A^{-1}$, then
{\footnotesize\begin{align*}
\Phi(A)\sharp\Phi(A^{-1})+\frac{1}{2}\left({Mm}\Phi(A)+\frac{1}{{Mm}}\Phi(A^{-1})-2\left(\Phi(A)\sharp\Phi(A^{-1})\right)\right)\leq \frac{M^2+m^2}{2{mM}}.
\end{align*}}
\end{theorem}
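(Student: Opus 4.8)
The plan is to prove \eqref{haha} by first observing that the two copies of $\Phi(A)\sharp\Phi(B)$ on the left-hand side cancel, so that the left-hand side collapses to $\frac{1}{2}(\sqrt{Mm}\,\Phi(A)+\frac{1}{\sqrt{Mm}}\Phi(B))$. Thus \eqref{haha} is equivalent to the single inequality
\[
\frac{1}{2}\Big(\sqrt{Mm}\,\Phi(A)+\frac{1}{\sqrt{Mm}}\Phi(B)\Big)\leq\frac{M+m}{2\sqrt{mM}}\,\Phi(A\sharp B).
\]
Before proving this reduced inequality I would record why \eqref{haha} then counts as a refinement of \eqref{polya}: applying the operator AM--GM inequality to the rescaled pair $\sqrt{Mm}\,\Phi(A)$ and $\frac{1}{\sqrt{Mm}}\Phi(B)$, whose scaling factors multiply to $1$ so that their geometric mean is exactly $\Phi(A)\sharp\Phi(B)$, shows that $\frac{1}{2}(\sqrt{Mm}\,\Phi(A)+\frac{1}{\sqrt{Mm}}\Phi(B))-\Phi(A)\sharp\Phi(B)\geq 0$. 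Hence the left-hand side of \eqref{haha} dominates $\Phi(A)\sharp\Phi(B)$ while still sitting below the P\'olya--Szeg\"o bound, which is the content of ``refinement.''

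To establish the reduced inequality I would argue by functional calculus and then pass through $\Phi$. Put $K=A^{-1/2}BA^{-1/2}$. From $m_1^2\leq A\leq M_1^2$ and $m_2^2\leq B\leq M_2^2$ one gets the operator sandwich $m^2A\leq B\leq M^2A$, i.e. $m^2\leq K\leq M^2$, with $m=m_2/M_1$ and $M=M_1/m_2$. Every point $\mu$ of the spectrum of $K^{1/2}$ then lies in $[m,M]$, so $(\mu-m)(\mu-M)\leq 0$, that is $Mm+\mu^2\leq(M+m)\mu$; dividing by $2\sqrt{Mm}\,\mu$ gives $\frac{1}{2}(\sqrt{Mm}+\frac{1}{\sqrt{Mm}}\mu^2)\leq\frac{M+m}{2\sqrt{Mm}}\mu$. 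By the functional calculus this is the operator inequality $\frac{1}{2}(\sqrt{Mm}\,I+\frac{1}{\sqrt{Mm}}K)\leq\frac{M+m}{2\sqrt{Mm}}K^{1/2}$. Conjugating both sides by $A^{1/2}$ and using $A^{1/2}KA^{1/2}=B$ together with $A^{1/2}K^{1/2}A^{1/2}=A\sharp B$ turns it into $\frac{1}{2}(\sqrt{Mm}\,A+\frac{1}{\sqrt{Mm}}B)\leq\frac{M+m}{2\sqrt{Mm}}(A\sharp B)$. Since $\Phi$ is linear on the left and positive, applying it yields the reduced inequality, hence \eqref{haha}.

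The step I expect to be the main obstacle is precisely the operator sandwich $m^2A\leq B\leq M^2A$: this is what ties the two independent pairs of bounds $(m_1,M_1)$ and $(m_2,M_2)$ together through the constants $m=m_2/M_1$ and $M=M_1/m_2$, and it is what localizes the spectrum of $K$ inside $[m^2,M^2]$. Here the lower estimate is immediate, since $m^2A\leq m^2M_1^2=m_2^2\leq B$, whereas the upper estimate $B\leq M^2A$ is the delicate point where one must check the compatibility of the two spectra. Finally, the special case $B=A^{-1}$ follows by specialization: then $A\sharp A^{-1}=I$, so $\Phi(A\sharp A^{-1})=I$, and the bounds on $A^{-1}$ induced by those on $A$ replace $\sqrt{Mm}$ by $Mm$ and produce the constant $\frac{M^2+m^2}{2mM}$ on the right; the identical cancellation together with the same AM--GM positivity argument then exhibits the second displayed inequality as a refinement of \eqref{kantoro}.
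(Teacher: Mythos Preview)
Your proof is correct and follows essentially the same route as the paper: both reduce \eqref{haha} to the inequality $Mm\,\Phi(A)+\Phi(B)\leq (M+m)\,\Phi(A\sharp B)$, obtained from the spectral sandwich $m^{2}\leq A^{-1/2}BA^{-1/2}\leq M^{2}$ and the factorization $(K^{1/2}-m)(M-K^{1/2})\geq 0$, followed by conjugation by $A^{1/2}$ and application of $\Phi$. The only cosmetic difference is that where you observe directly that the two copies of $\Phi(A)\sharp\Phi(B)$ on the left cancel, the paper packages this same cancellation as an instance of the Kittaneh--Manasrah refinement \eqref{rif-y} at $\nu=\tfrac12$, which for that value is in fact an identity.
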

\begin{proof}
If $0<m_1^2\leq A\leq M_1^2$ and  $0<m_2^2\leq B\leq M_2^2$, then

\begin{align*}
m^2={m_2^2\over M_1^2}\leq A^{-1\over2}BA^{-1\over2}\leq{M_1^2\over m_2^2}=M^2,
\end{align*}
whence
\begin{align*}
\left(M-\left(A^{-1\over2}BA^{-1\over2}\right)^{1\over2}\right)\left(\left(A^{-1\over2}BA^{-1\over2}\right)^{1\over2}-m\right)\geq0.
\end{align*}
Hence
\begin{align*}
MmA+B\leq (M+m)A\sharp B,
\end{align*}
whence
\begin{align}\label{saqse}
Mm\Phi(A)+\Phi(B)\leq(M+m) \Phi(A\sharp B).
\end{align}
Using lemma \ref{rif-y} for the operators $Mm\Phi(A)$, $\Phi(B)$ and $\nu={1\over2}$ we get
{\footnotesize\begin{align}\label{saqse2}
\sqrt{Mm}\left(\Phi(A)\sharp\Phi(B)\right)+{1\over2}\left(Mm\Phi(A)+\Phi(B)-2\sqrt{Mm}\left(\Phi(A)\sharp\Phi(B)\right)\right)\leq {1\over2}\left(Mm\Phi(A)+\Phi(B)\right).
\end{align}}
Applying inequalities \eqref{saqse} and \eqref{saqse2} we get the first inequality.
 In particular, if we consider $m_1^2=m^2\leq A\leq M^2=M_1^2$, then by putting $m_2^2={1\over M^2}\leq A^{-1}\leq{1\over m^2}=M_2^2$ in \eqref{haha} we reach the desired inequality.
\end{proof}
If we take $\Phi$ in \eqref{haha} to be the positive linear map defined on the diagonal blocks of operators by $\Phi({\rm diag}(A_1, \cdots, A_n))=\frac{1}{n}\sum_{j=1}^nA_j$, then we get the following refinements of a reversed Cauchy-Schwarz operator inequality.
\begin{corollary}
Let $0<m_1^2\leq A_j\leq M_1^2$, $0<m_2^2\leq B_j\leq M_2^2$$\,\,(1\leq j\leq n)$, $m={m_2\over M_1}$, $M={M_1\over m_2}$. Then
{\begin{align*}
\left(\sum_{ j=1}^n A_j\sharp\sum_{ j=1}^n B_j\right)+\frac{1}{2}\left(\sqrt{Mm}\sum_{ j=1}^n A_j\frac{1}{{\sqrt{Mm}}}\sum_{ j=1}^n B_j-\right.&\left.2\left(\sum_{ j=1}^n A_j\sharp\sum_{ j=1}^n B_j\right)\right)\nonumber\\&\leq \frac{M+m}{2\sqrt{mM}}\left(\sum_{ j=1}^n A_j\sharp B_j\right).
\end{align*}}

\end{corollary}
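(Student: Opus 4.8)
The plan is to reduce the claimed inequality to the single weighted estimate
\[
Mm\,\Phi(A)+\Phi(B)\leq (M+m)\,\Phi(A\sharp B),
\]
which I will call $(\ast)$, and then to dress $(\ast)$ up into the stated refined form using the homogeneity of the geometric mean together with the $\nu=\tfrac12$ instance of \eqref{rif-y}. The starting observation is that the geometric mean is positively homogeneous, $(\lambda X)\sharp(\mu Y)=\sqrt{\lambda\mu}\,(X\sharp Y)$ for scalars $\lambda,\mu>0$. Hence applying the operator form of \eqref{rif-y} with $\nu=\tfrac12$ (so $r=\tfrac12$) to the pair $Mm\,\Phi(A),\ \Phi(B)$ produces
\[
\sqrt{Mm}\bigl(\Phi(A)\sharp\Phi(B)\bigr)+\tfrac12\Bigl(Mm\,\Phi(A)+\Phi(B)-2\sqrt{Mm}\bigl(\Phi(A)\sharp\Phi(B)\bigr)\Bigr)\leq \tfrac12\bigl(Mm\,\Phi(A)+\Phi(B)\bigr),
\]
and dividing by $\sqrt{Mm}$ turns the left member into precisely the left-hand side of the target while the right member becomes $\frac{1}{2\sqrt{Mm}}\bigl(Mm\,\Phi(A)+\Phi(B)\bigr)$.

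The mathematical heart is therefore $(\ast)$, which I would establish by a Kantorovich-type argument. From $m_1^2\le A\le M_1^2$ and $m_2^2\le B\le M_2^2$, together with $m=m_2/M_1$ and $M=M_1/m_2$, I would first derive the spectral enclosure $m^2 I\le A^{-1/2}BA^{-1/2}\le M^2 I$ (using $A\le M_1^2\Rightarrow A^{-1}\ge M_1^{-2}$ and the analogous lower bound). Writing $T=(A^{-1/2}BA^{-1/2})^{1/2}$, we have $\mathrm{sp}(T)\subseteq[m,M]$, so the scalar fact $(M-t)(t-m)\ge 0$ on $[m,M]$ lifts through the functional calculus to $(MI-T)(T-mI)\ge 0$, that is $T^2+Mm\,I\le (M+m)T$. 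Congruence by $A^{1/2}$ converts $T^2=A^{-1/2}BA^{-1/2}$ into $B$, the scalar $Mm$ into $MmA$, and $T$ into the geometric mean $A^{1/2}T A^{1/2}=A\sharp B$, giving $B+MmA\le (M+m)(A\sharp B)$; applying the positive unital map $\Phi$ then yields $(\ast)$.

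To conclude I would chain the two displays: the left-hand side of the claim equals $\frac{1}{2\sqrt{Mm}}\bigl(Mm\,\Phi(A)+\Phi(B)\bigr)$, and by $(\ast)$ this is at most $\frac{M+m}{2\sqrt{Mm}}\Phi(A\sharp B)$, which is the asserted bound and a genuine tightening of \eqref{polya}. For the Kantorovich special case I would put $B=A^{-1}$, so that $A\sharp A^{-1}=I$ and the right-hand operator collapses to a scalar multiple of $I$; substituting into the general inequality and simplifying the geometric-mean weights and the constant recovers the stated refinement of \eqref{kantoro}.

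I expect the main obstacle to be the congruence step $(MI-T)(T-mI)\ge 0\Rightarrow B+MmA\le(M+m)(A\sharp B)$: one must use that $T$ is self-adjoint, so that the product of the two commuting factors is positive exactly because $(M-t)(t-m)\ge 0$ holds pointwise on the spectrum, and then check that conjugation by $A^{1/2}$ sends $(A^{-1/2}BA^{-1/2})^{1/2}$ to $A^{1/2}(A^{-1/2}BA^{-1/2})^{1/2}A^{1/2}=A\sharp B$. A secondary, purely clerical point is that at $\nu=\tfrac12$ the refinement \eqref{rif-y} is in fact an equality after the two $\sqrt{Mm}(\Phi(A)\sharp\Phi(B))$ terms cancel, so the refined left-hand side is really a repackaging of the weighted average $\frac12\bigl(\sqrt{Mm}\,\Phi(A)+\frac{1}{\sqrt{Mm}}\,\Phi(B)\bigr)$; the bookkeeping relating the two forms should be tracked with care.
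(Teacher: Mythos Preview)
Your argument is correct, and it is essentially the paper's own proof of the preceding theorem (inequality \eqref{haha}): the Kantorovich-type bound $MmA+B\le (M+m)\,A\sharp B$ obtained from $(MI-T)(T-mI)\ge 0$ with $T=(A^{-1/2}BA^{-1/2})^{1/2}$, pushed through $\Phi$, and then repackaged via the $\nu=\tfrac12$ case of \eqref{rif-y} applied to the pair $Mm\,\Phi(A),\,\Phi(B)$. Even your remark that at $\nu=\tfrac12$ the ``refinement'' is an algebraic identity matches the paper's display \eqref{saqse2}.

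What you have not done is the one step that actually constitutes the proof of \emph{this} corollary. The paper deduces the corollary from \eqref{haha} by specialising the positive unital linear map: take $A=\mathrm{diag}(A_1,\dots,A_n)$, $B=\mathrm{diag}(B_1,\dots,B_n)$ and $\Phi(\mathrm{diag}(X_1,\dots,X_n))=\tfrac1n\sum_{j=1}^n X_j$. Then $\Phi(A)=\tfrac1n\sum_j A_j$, $\Phi(B)=\tfrac1n\sum_j B_j$, $\Phi(A\sharp B)=\tfrac1n\sum_j A_j\sharp B_j$, and by homogeneity $\Phi(A)\sharp\Phi(B)=\tfrac1n\bigl(\sum_j A_j\bigr)\sharp\bigl(\sum_j B_j\bigr)$; substituting into \eqref{haha} and clearing the common factor $\tfrac1n$ gives exactly the displayed inequality. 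Your write-up stays at the level of a general $\Phi$ and never performs this specialisation, so strictly speaking you have re-proved the theorem rather than the corollary. (Your closing paragraph about $B=A^{-1}$ also belongs to the theorem, not to this corollary.) Add the block-diagonal choice of $\Phi$ and the routine identifications above, and your proof is complete and in line with the paper's.
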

\begin{proposition}
Let $0<m\leq A\leq M$ and $x\in {\mathscr H}$. Then
{\footnotesize\begin{align*}\label{saqse}
\langle Ax,x\rangle^{1\over2}\langle A^{-1}x,x\rangle^{1\over2}+{1\over2}\left(\sqrt[4]{Mm}\langle Ax,x\rangle^{1\over2}-{1\over\sqrt[4]{Mm}}\langle A^{-1}x,x\rangle^{1\over2}\right)^2\leq{M+m\over2\sqrt{Mm}}\langle x,x\rangle^2.
\end{align*}}
\end{proposition}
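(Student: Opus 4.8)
The plan is to treat the left-hand side as a refinement of a scalar geometric mean that collapses to an arithmetic mean, and then to bound that arithmetic mean by the numerical-range estimate attached to $0<m\le A\le M$. First I would apply inequality \eqref{rif-y} with $\nu=\frac12$ (hence $r=\frac12$) to the two positive scalars $a=\sqrt{Mm}\,\langle Ax,x\rangle$ and $b=\frac{1}{\sqrt{Mm}}\,\langle A^{-1}x,x\rangle$. Because the $\sqrt{Mm}$ factors cancel in the product, one has $\sqrt{ab}=\langle Ax,x\rangle^{1/2}\langle A^{-1}x,x\rangle^{1/2}$, while $\frac12(\sqrt a-\sqrt b)^2$ is exactly the squared term $\frac12\bigl(\sqrt[4]{Mm}\,\langle Ax,x\rangle^{1/2}-\frac{1}{\sqrt[4]{Mm}}\,\langle A^{-1}x,x\rangle^{1/2}\bigr)^2$ appearing in the statement. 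For $\nu=\frac12$ the inequality \eqref{rif-y} is in fact the identity $\sqrt{ab}+\frac12(\sqrt a-\sqrt b)^2=\frac{a+b}{2}$, so the whole left-hand side equals $\frac12\bigl(\sqrt{Mm}\,\langle Ax,x\rangle+\frac{1}{\sqrt{Mm}}\,\langle A^{-1}x,x\rangle\bigr)$. This reduces the proposition to a single scalar estimate for a weighted arithmetic mean of $\langle Ax,x\rangle$ and $\langle A^{-1}x,x\rangle$.

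Second, I would produce the bound on this weighted mean from the condition $0<m\le A\le M$. Since $A$ commutes with $A^{-1}$, the operator inequality $(MI-A)(A-mI)\ge0$ holds; multiplying it on both sides by $A^{-1/2}$ gives $A+MmA^{-1}\le(M+m)I$, and pairing this with the vector $x$ yields the scalar Kantorovich-type estimate $\langle Ax,x\rangle+Mm\,\langle A^{-1}x,x\rangle\le(M+m)\langle x,x\rangle$. Applying the arithmetic--geometric mean inequality to the two summands recovers the homogeneous Kantorovich bound $\langle Ax,x\rangle\langle A^{-1}x,x\rangle\le\frac{(M+m)^2}{4Mm}\langle x,x\rangle^2$. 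These are the two inequalities available to dominate the arithmetic mean obtained in the first step by a multiple of a power of $\langle x,x\rangle$.

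The main obstacle is to align these two ingredients. After clearing the common factor $2\sqrt{Mm}$, the reduced claim asks for $Mm\,\langle Ax,x\rangle+\langle A^{-1}x,x\rangle$ to be dominated by $(M+m)\langle x,x\rangle^2$, whereas the operator inequality directly controls the reciprocally weighted sum $\langle Ax,x\rangle+Mm\,\langle A^{-1}x,x\rangle$; so the delicate point is the correct placement of the weights $\sqrt{Mm}$ and $1/\sqrt{Mm}$ fed into \eqref{rif-y}, which is precisely what fixes the form of the squared term, together with the exact degree of homogeneity in $\langle x,x\rangle$ on the right. I would pin the homogeneity down by tracking the behaviour of both sides under the scaling $x\mapsto\lambda x$ (normalizing $\|x\|=1$ when convenient, so that $\langle x,x\rangle$ and $\langle x,x\rangle^2$ coincide), and fix the weights by choosing $a,b$ so that the weighted arithmetic mean produced by the identity in the first step is exactly the combination controlled by $A+MmA^{-1}\le(M+m)I$. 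Once the weights and the degree of homogeneity are matched, the two steps chain together to give the asserted bound.
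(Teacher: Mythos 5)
Your first step is executed correctly (for $\nu=\tfrac12$ inequality \eqref{rif-y} is indeed the identity $\sqrt{ab}+\tfrac12(\sqrt a-\sqrt b)^2=\tfrac{a+b}{2}$), and the misalignment you flag in your last paragraph is not a loose end you can ``pin down'' later: it is irreparable, because the proposition as printed is false. With your $a=\sqrt{Mm}\,\langle Ax,x\rangle$, $b=\tfrac{1}{\sqrt{Mm}}\langle A^{-1}x,x\rangle$, take $m=1$, $M=4$, $A=\mathrm{diag}(4,1)$ and the unit vector $x=\tfrac{1}{\sqrt2}(1,1)^{T}$; then $\langle Ax,x\rangle=\tfrac52$, $\langle A^{-1}x,x\rangle=\tfrac58$, the printed left-hand side equals $\tfrac54+\tfrac12\bigl(\tfrac{3\sqrt5}{4}\bigr)^2=\tfrac{85}{32}\approx 2.66$, while the right-hand side is $\tfrac{M+m}{2\sqrt{Mm}}=\tfrac54$ (and here $\langle x,x\rangle=1$, so no normalization rescues it). Two misprints are responsible: the weights $\sqrt[4]{Mm}$ and $\tfrac{1}{\sqrt[4]{Mm}}$ in the squared term are interchanged, and the exponent on $\langle x,x\rangle$ should be $1$, not $2$ (exactly the degree mismatch your scaling check detects: the left side is homogeneous of degree $2$ in $x$, the printed right side of degree $4$). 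The swap is inherited from the ``in particular'' display of the preceding theorem: for $B=A^{-1}$ with $m\leq A\leq M$ one has $A^{-\frac12}BA^{-\frac12}=A^{-2}$ with spectrum in $[M^{-2},m^{-2}]$, so the ratio bounds feeding into \eqref{haha} are $\tilde m=\tfrac1M$, $\tilde M=\tfrac1m$, giving $\sqrt{\tilde M\tilde m}=\tfrac{1}{\sqrt{Mm}}$ --- i.e.\ the \emph{small} weight goes on $\Phi(A)$ and the large one on $\Phi(A^{-1})$, opposite to what is printed.

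Once that correction is made, your own two ingredients constitute a complete proof, and it is in substance the intended one: the paper states this proposition with no proof at all, and it is evidently meant as the specialization of \eqref{haha} to $B=A^{-1}$ and $\Phi(T)=\langle Tx,x\rangle$ with $\|x\|=1$, whose proof is precisely your pair of tools at the operator level (\eqref{rif-y} at $\nu=\tfrac12$ together with $MmA+B\leq (M+m)\,A\sharp B$, which for $B=A^{-1}$ is your $A+MmA^{-1}\leq(M+m)I$ from $(MI-A)(A-mI)\geq0$). Concretely: choose $a=\tfrac{1}{\sqrt{Mm}}\langle Ax,x\rangle$ and $b=\sqrt{Mm}\,\langle A^{-1}x,x\rangle$, so the squared term becomes $\bigl(\tfrac{1}{\sqrt[4]{Mm}}\langle Ax,x\rangle^{\frac12}-\sqrt[4]{Mm}\,\langle A^{-1}x,x\rangle^{\frac12}\bigr)^2$; then the left-hand side equals $\tfrac{a+b}{2}=\tfrac{1}{2\sqrt{Mm}}\bigl(\langle Ax,x\rangle+Mm\langle A^{-1}x,x\rangle\bigr)\leq\tfrac{M+m}{2\sqrt{Mm}}\langle x,x\rangle$, valid for every $x$ by homogeneity. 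Equality holds at the counterexample vector above, confirming these are the correct weights; your auxiliary derivation of the product Kantorovich bound is correct but becomes superfluous. In short: right method, and you located the crux exactly, but the proposal stops one step short of the decisive conclusion --- no choice of $a,b$ proves the statement as printed, because the squared term itself (not merely your bookkeeping) must be changed.
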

\bigskip
%===================================================================================================================================
\bibliographystyle{amsplain}

\end{document}